\newtheorem{thm}{Theorem}
\newtheorem{defn}{Definition}
\newtheorem{lem}{Lemma}
\newtheorem{prop}{Proposition}
\newtheorem{cor}{Corollary}
\newtheorem{example}{Example}
\newtheorem{remark}{Remark}
\newcommand{\al}{\alpha}
\newcommand{\be}{\beta}
\newcommand{\la}{\lambda}
\newcommand{\eps}{\varepsilon}
\providecommand*{\shuffle}{%
  \mathbin{\mathpalette\shuffle@{}}%
}
\newcommand*{\shuffle@}[2]{%
  \sbox0{$#1\vcenter{}$}%
  \kern .15\ht0 
  \rlap{\vrule height .25\ht0 depth 0pt width 2.5\ht0}%
  \raise.1\ht0\hbox to 2.5\ht0{%
    \vrule height 1.75\ht0 depth -.1\ht0 width .17\ht0 %
    \hfill
    \vrule height 1.75\ht0 depth -.1\ht0 width .17\ht0 %
    \hfill
    \vrule height 1.75\ht0 depth -.1\ht0 width .17\ht0 %
  }%
  \kern .15\ht0 
}
\pgfplotsset{compat=1.8}
\tikzstyle{bsq}=[rectangle, draw, thick, minimum width=1cm, minimum height=1cm] 
\tikzstyle{bver}=[rectangle, draw, thick, minimum width=1cm, minimum height=2cm]
\tikzstyle{bhor}=[rectangle, draw, thick, minimum width=2cm, minimum height=1cm]
\tikzstyle{bsqg}=[rectangle, draw=gray!30!white, thick, fill=gray!30!white, minimum width=1cm, minimum height=1cm] 
\tikzstyle{bverg}=[rectangle, draw=gray!30!white, thick, fill=gray!30!white, minimum width=1cm, minimum height=2cm]
\tikzstyle{bhorg}=[rectangle, draw=gray!30!white, thick, fill=gray!30!white, minimum width=2cm, minimum height=1cm]
\title{On the product of generating functions for domino and bi-tableaux}
\author{Ekaterina A. Vassilieva}
\begin{document}

\maketitle
\noindent {\bf Abstract.} The connection between the generating functions of various sets of tableaux and the appropriate families of quasisymmetric functions is a significant tool to give a direct analytical proof of some advanced bijective results and provide new combinatorial formulas. In this paper we focus on two kinds of type B Littlewood-Richardson coefficients and derive new formulas using weak composition quasisymmetric functions and Chow's quasisymmetric functions. In the type A case these coefficients give the multiplication table for Schur functions, i.e. the generating functions for classical Young tableaux. We look at their generalisations involving a set of bi-tableaux and domino tableaux.

\section{Type A quasisymmetric functions}
\subsection{Permutations, Young tableaux and descent sets}
For any positive integer $n$ write $[n] = \{1,\cdots, n\}$, $S_n$ the symmetric group on $[n]$. A  {\bf partition} $\la$ of  $n$, denoted $\la \vdash n$  is a sequence $\la=(\la_1,\la_2,\cdots,\la_p)$ of $\ell(\la)=p$ parts sorted in decreasing order such that $|\la| = \sum_i{\la_i} = n $. A partition $\la$ is usually represented as a~Young diagram of $n=|\la|$ boxes arranged in $\ell(\la)$ left justified rows so that the $i$-th row from the top contains $\la_i$ boxes.  A Young diagram whose boxes are filled with positive integers such that the entries are increasing along the rows and strictly increasing down the columns is called a {\bf semistandard Young tableau}. If the entries of a semistandard Young tableau are restricted to the elements of $[n]$ and strictly increasing along the rows, we call it a {\bf standard Young tableau}. The partition $\la$ is the {\bf shape} of the tableau and we denote $SYT(\la)$ (resp. $SSYT(\la)$) the set of standard (resp. semistandard) Young tableaux of shape $\la$.\\
One important feature of a permutation $\pi$ in $S_n$ is its {\bf descent set}, i.e. the subset of $[n-1]$ defined as $Des(\pi) = \{1\leq i \leq n-1\mid \pi(i)>\pi(i+1)\}$. Similarly, define the {\bf descent set of a standard Young tableau} $T$ as $Des(T) = \{1\leq i \leq n-1\mid i $ is in a strictly higher row than $i+1\}$.
\subsection{Quasisymmetric expansion of Schur polynomials}
Let $X = \{x_1, x_2, \cdots \}$ and $Y = \{y_1, y_2, \cdots \}$ be two alphabets of commutative indeterminates. Denote also $XY$ the product alphabet $\{x_iy_j\}_{i,j}$ ordered by the lexicographical order.  Let $F_I(X)$ be the {\bf Gessel's fundamental quasisymmetric function} on $X$ indexed by the subset $I \subset [n-1]$:
\begin{equation}
F_{I}(X) = \sum\limits_{\substack{i_1 \leq \cdots \leq i_n\\k\in I \Rightarrow i_k<i_{k+1}}} x_{i_1}x_{i_2} \cdots x_{i_n}.
\end{equation}
The power series $F_I(X)$ is not symmetric in $X$ but verifies the property that for any strictly increasing sequence 
of indices $i_1 < i_2 <\cdots< i_p$ the coefficient of $x_1^{k_1}x_2^{k_2}\cdots x_p^{k_p}$ is equal to the coefficient of $x_{i_1}^{k_1}x_{i_2}^{k_2}\cdots x_{i_p}^{k_p}$.
Let $s_\la(X)$ be the Schur polynomial indexed by partition $\la$ on alphabet $X$. Schur polynomials are the generating functions for semistandard Young tableaux. As a result,
\begin{equation}
\label{eq : ST}
s_{\la}(X) = \sum_{T \in SSYT(\la)}X^T = \sum_{T \in SYT(\la)}F_{Des(T)}(X).
\end{equation}
Using for any $\pi \in S_n$ Gessel's formula for the coproduct of fundamental quasisymmetric functions in \cite{Ges84} 
\begin{equation}
\label{eq : copr}
F_{Des(\pi)}(XY) = \sum_{\sigma, \rho \in S_n;\; \sigma\rho = \pi}F_{Des(\sigma)}(X)F_{Des(\rho)}(Y),
\end{equation}
and the Cauchy formula for Schur polynomials
\begin{equation}
\label{eq : CauchyPrime}
\sum_{\la \vdash n}s_{\la}(X)s_{\la}(Y) = s_{(n)}(XY),
\end{equation}
one can derive
\begin{equation}
\label{eq : CauchyDes}
\sum_{\substack{\la \vdash n\\T,U \in SYT(\la)}}F_{Des(T)}(X)F_{Des(U)}(Y) = F_{\emptyset}(XY) = \sum_{\pi \in S_n}F_{Des(\pi)}(X)F_{Des(\pi^{-1})}(Y).
\end{equation}
This gives a direct analytical proof of one important property usually proved using the {\it Robinson-Schensted}  correspondence, i.e. that permutations $\pi$ in $S_n$ are in bijection with pairs of standard Young tableaux $T,U$ of the same shape such that $Des(T) = Des(\pi)$ and $Des(U) = Des(\pi^{-1})$.
\subsection{Product of Schur polynomials}
\label{sec : prodSchur}
One may write a permutation $\al \in S_n$ as a word on the letters in $[n]$, $\al = \al_1 \al_2 \cdots \al_n$ where $\al_i = \al(i)$. Given $\alpha \in S_n$ and $\beta \in S_m$, let $\alpha \shuffle \beta$ be the set of permutations $\gamma \in S_{m+n}$ obtained by shuffling the letters $\alpha$ and $\overline{\beta} = n+\beta_1\,\, n+\beta_2\cdots\,\, n+\beta_m$ such that the initial order of the letters of $\alpha$ (resp. of $\overline{\beta}$) is preserved in $\gamma$. By extension if $A$ and $B$ are two sets of permutations, we denote $A\shuffle B$ the set of all the shuffles of a permutation of $A$ and a permutation of $B$.  We have the product formula 
\begin{equation}
\label{eq : ProdQuasi}
F_{Des(\alpha)}(X)F_{Des(\beta)}(X) =  \sum_{\gamma \in \alpha \shuffle \beta}F_{Des(\gamma)}(X).
\end{equation}
Given a standard Young tableau $T$ of shape $\la$, Equation \eqref{eq : CauchyDes} proves the existence of classes of permutations $C_{T}$ such that for any $\pi \in C_T$, $Des(\pi^{-1}) = Des(T)$ and there is a descent preserving bijection between $C_T$ and $SYT(\la)$. 
\begin{remark}
Classically the $C_T$ are the {\it Knuth classes} of permutations but here we derive these results directly without using plactic relations and/or the Robinson-Schensted correspondence. 
\end{remark}
\noindent Equation \eqref{eq : ProdQuasi} directly implies for integer partitions $\la$ and $\mu$ and $T, U \in SYT(\la)\times SYT(\mu)$:
\begin{equation}
\label{eq : ProdSchur}
s_{\la}(X)s_{\mu}(X) = \sum_{\al \in C_T}F_{Des(\al)}(X)\sum_{\beta \in C_U}F_{Des(\beta)}(X) = \sum_{\gamma \in C_T \shuffle C_U}F_{Des(\gamma)}(X).
\end{equation}
For any $\nu \vdash |\la| + |\mu|$ define formally the coefficients $c^\nu_{\la \mu}$ as the coefficients of the expansion of $s_{\la}(X)s_{\mu}(X)$ in the Schur basis $(s_\nu)_\nu$. Equation \eqref{eq : ProdSchur} shows for some $V_\nu \in SYT(\nu)$ that 
\begin{equation}
\label{eq : LR}
\sum_{\nu \vdash  |\la| + |\mu|} c^\nu_{\la \mu} s_{\nu}(X) = \sum_{\substack{\nu \vdash  |\la| + |\mu| \\ \gamma \in C_{V_\nu}}}c^\nu_{\la \mu}F_{Des(\gamma)}(X) = \sum_{\gamma \in C_T \shuffle C_U}F_{Des(\gamma)}(X).
\end{equation}
As a result, we have the following proposition.
\begin{prop}
\label{prop : typeA}
For $I \subset [|\la|+|\mu|-1]$ denote $D_I$ the set of permutations $\pi$ such that $Des(\pi) =I$. Using the notations above for integer partitions $\la, \mu$ and for $T,U$ in $SYT(\la)\times SYT(\mu)$, there is a set of tableaux $V_\nu \in SYT(\nu)$ indexed by the set of integer partitions $\nu \vdash |\la| + |\mu|$ such that 
\begin{equation}
\label{eq : lem1}
\sum_{\nu \vdash  |\la| + |\mu|}c^\nu_{\la \mu}|C_{V_\nu} \cap D_I| =  |\left (C_T \shuffle C_U \right) \cap D_I|.
\end{equation}
\end{prop}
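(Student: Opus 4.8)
The plan is to extract Proposition~\ref{prop : typeA} as a coefficient identity from Equation~\eqref{eq : LR} by pairing both sides with the fundamental quasisymmetric functions indexed by a fixed subset $I$. Concretely, I would start from the already-established chain of equalities
\[
\sum_{\nu \vdash |\la|+|\mu|} c^\nu_{\la\mu}\, s_\nu(X) \;=\; \sum_{\substack{\nu \vdash |\la|+|\mu| \\ \gamma \in C_{V_\nu}}} c^\nu_{\la\mu}\, F_{Des(\gamma)}(X) \;=\; \sum_{\gamma \in C_T \shuffle C_U} F_{Des(\gamma)}(X),
\]
whose validity is guaranteed by \eqref{eq : ProdSchur}, \eqref{eq : LR} and the existence (from \eqref{eq : CauchyDes} and \eqref{eq : ProdQuasi}) of the Knuth-type classes $C_{V_\nu}$ with $Des(\pi^{-1}) = Des(V_\nu)$ and a descent-preserving bijection $C_{V_\nu} \leftrightarrow SYT(\nu)$. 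The tableaux $V_\nu$ are exactly the ones furnished there, so no new construction is needed; the content of the Proposition is purely the bookkeeping of which $\gamma$ have a prescribed descent set.

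The key step is then to observe that the fundamental quasisymmetric functions $\{F_J : J \subseteq [N-1]\}$ with $N = |\la|+|\mu|$ are linearly independent (in sufficiently many variables), so two expressions $\sum_\gamma F_{Des(\gamma)}(X)$ may be compared coefficientwise in the $F_J$ basis. For a fixed $J = I$, the coefficient of $F_I(X)$ on the left-hand side is $\sum_{\nu} c^\nu_{\la\mu}\,|\{\gamma \in C_{V_\nu} : Des(\gamma) = I\}| = \sum_\nu c^\nu_{\la\mu}\,|C_{V_\nu}\cap D_I|$, using the definition $D_I = \{\pi : Des(\pi) = I\}$. The coefficient of $F_I(X)$ on the right-hand side is $|\{\gamma \in C_T\shuffle C_U : Des(\gamma) = I\}| = |(C_T\shuffle C_U)\cap D_I|$. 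Equating the two gives \eqref{eq : lem1} directly.

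The main (and essentially only) technical point to handle carefully is the linear independence of the $F_I$: this requires working with an alphabet $X$ of at least $N$ variables, which is harmless since all identities above are identities of formal power series in infinitely many variables and specialize to any finite truncation. One should also check that $C_T\shuffle C_U \subseteq S_N$ is genuinely a multiset-free set — i.e. that the shuffle of permutation classes produces each $\gamma$ with multiplicity one in the relevant counting, which follows from \eqref{eq : ProdQuasi} being a sum over the set $\alpha\shuffle\beta$ with the shuffles of distinct $(\alpha,\beta)$ pairs from $C_T\times C_U$ landing in disjoint blocks (this is implicit in how \eqref{eq : ProdSchur} is written). Granting these routine points, the Proposition is immediate. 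I expect the only place a reader might stumble is in seeing that the coefficient of $F_I$ on the left is the $c^\nu_{\la\mu}$-weighted sum rather than something involving the $SYT(\nu)$ directly — but the descent-preserving bijection $C_{V_\nu}\leftrightarrow SYT(\nu)$ means $|C_{V_\nu}\cap D_I|$ already equals $|\{W \in SYT(\nu) : Des(W) = I\}|$, so either formulation is the same number.
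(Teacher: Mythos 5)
Your proposal is correct and follows exactly the paper's route: the paper states the proposition with "As a result" immediately after Equation \eqref{eq : LR}, the intended argument being precisely the extraction of the coefficient of $F_I$ from both sides of that identity using the linear independence of the fundamental quasisymmetric functions. Your additional remarks on multiplicities and on the alphabet size are sensible housekeeping but do not constitute a different approach.
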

The following sections are dedicated to the extension of this method to weak composition quasisymmetric functions and Chow's type B quasisymmetric functions. 
\section{Type B extension to weak composition quasisymmetric functions}
\subsection{Weak composition quasisymmetric functions and \break coloured permutations}
Weak composition quasisymmetric functions are a generalisation of Gessel's quasisymmetric functions introduced in \cite{GuoYuZha17} and \cite{GuoYuThi19} in order to provide a framework for the study of a question a G.-C. Rota relating {\it Rota-Baxter algebras} and symmetric type functions.\\
We consider a finite alphabet $X = \{x_1, x_2,\, \ldots , x_M\}$. For any non-negative integer $n$, let $\mathcal{WC}(n)$ be the set of {\bf weak compositions} of $n$, i.e. the set of ordered tuples of non-negative integers that sum up to $n$. A weak composition $\al \in \mathcal{WC}(n)$ is of the form $$\al =(0^{i_1}, s_1, 0^{i_2}, s_2,\ldots, 0^{i_k}, s_k, 0^{i_{k+1}})$$
with $i_j \geq 0$, $1\leq j \leq k+1$, $s_i >0$, $1\leq j \leq k$ and $\sum s_i = n$. We further define $\ell(\al)=k+i_1+\cdots+i_{k+1}$ the
{\bf length} of $\al$, $\ell_0(\al)= i_1 +\cdots+i_{k+1}$ as its {\bf $0$-length}, $|\al| = n$ the {\bf weight} of $\al$ and $||\al|| = |\al| + \ell_0(\al)$ the {\bf total weight} of $\al$. The descent set of $\al$ is
$$D(\al)=\{a_1,\ldots, a_k | a_j =i_1+s_1+\cdots+i_j+s_j, j=1,\ldots,k\}.$$
\begin{defn}
Given a weak composition $\al =(0^{j_1}, s_1, 0^{j_2}, s_2,\ldots, 0^{j_k}, s_k, 0^{j_{k+1}})$ with $D(\al) = \{a_1, a_2,\ldots, a_k\}$ define the {\bf weak composition fundamental quasisymmetric function} indexed by $\al$ the formal power series
\footnotesize
\begin{align}
\label{eq : defWCF}
\nonumber &\widetilde{F}_\al(X) = \\
\hspace{-0mm}&\sum_{\substack{1\leq i_1 \leq \cdots \leq i_{a_k+j_{k+1}} \\ j\in D(\al) \Rightarrow i_j < i_{j+1}}}\hspace{-6mm}x_{i_1}^0\cdots x_{i_{j_1}}^0x_{i_{j_1+1}}\cdots x_{i_{a_1}}\cdots x_{i_{a_{k-1}+j_k}+1}\cdots x_{i_{a_k}} x_{i_{a_k+1}}^0\cdots x_{i_{a_{k}+j_{k+1}}}^0
\end{align}
\end{defn}
\normalsize
Weak composition fundamental quasisymmetric functions may also be described in terms of $P$-partitions indexed by {\it coloured permutations}. Define the total order $$\overline{1}<\overline{2}<\overline{3}<\cdots<1<2<3\cdots$$
and denote $|\overline{i}| = i$ for $i>0$. We look at chain posets $P_\pi = \{ \pi_1 <_{P_\pi}\pi_2<_{P_\pi}\cdots<_{P_\pi}\pi_l\}$ indexed by coloured permutations $\pi$ for any non-negative integer $l$ where the $\pi_i$ are distinct letters in $[l]\cup[\overline{l}]$ and such that $|\pi_1||\pi_2|\cdots|\pi_l|$ has no repeated letters. A {\bf $ P_\pi$-partition} is a map $f\;:\;P_\pi \rightarrow [M]$ such that for $x <_{P_\pi} y$, $f(x) \leq f(y)$ with $x>y \Rightarrow f(x) < f(y)$. The {\bf weight} of $f$, $w(f)$ is the monomial
$$w(f) = \prod_{\pi_i \in P_\pi}x_{f(\pi_i)}^{\delta_{\pi_i}},\mbox{ where } \delta_{\pi_i} = \begin{cases}0\mbox{ if }\pi_i\mbox{ is an overlined letter,}\\ 1\mbox{ otherwise.}\end{cases}$$
Denote $\mathcal{A}(\pi)$ the set of $ P_\pi$-partitions. We are interested in the generating functions $\Gamma$ indexed by coloured permutations $\pi$
\begin{equation}
\Gamma(\pi) = \sum_{f \in \mathcal{A}(\pi)}w(f).
\end{equation}
Li shows in \cite{Li18} that the $\Gamma(\pi)$ can be expressed as a $\mathbb{Z}$-linear combination of the $F_\al$ but not always positive. In particular if the indices $j_1<j_2<\cdots<j_p$ of the overlined letters in a coloured permutation $\pi$ are such that $\pi_{j_1}<\pi_{j_2}<\cdots<\pi_{j_p}$, denote the weak composition $$comp(\pi) = (0^{i_1}, s_1, 0^{i_2}, s_2,\ldots, 0^{i_k}, s_k, 0^{i_{k+1}})$$ where $(s_1, s_2, \ldots, s_k)$ is the tuple of lengths of the sequences of consecutive increasing non-overlined letters in $\pi$ and $i_r$ is the quantity of overlined letters between sequences $r-1$ and $r$. We have $\ell_0(comp(\pi))=p$ and we denote $S_n^{(p)}$ the set of such coloured permutations with $|comp(\pi)| = n$. As an example for $\pi = 597\overline{1}\overline{2}6\overline{3}8\overline{4}$, we have $comp(\pi) = (2,1,0^2,1,0,1,0)$. As a direct consequence of Equation \eqref{eq : defWCF}, for $n,p\geq0$ and $\pi \in S_n^{(p)}$, one has (\cite{Li18}) $$\widetilde{F}_{comp(\pi)} = \Gamma(\pi).$$ On the other hand, take $\pi = 3\overline{2}\overline{1}$, then it is easy to show that $\Gamma(\pi) = \widetilde{F}_{(1,0^2)} - \widetilde{F}_{(1,0)}.$
Finally, as in Section \ref{sec : prodSchur}, define for $\alpha, \beta \in S_n^{(p)} \times S_r^{(q)}$ the set $\al \shuffle \beta$ of coloured permutations obtained by shuffling the letters of $\widetilde{\al}$ and $\widehat{\be}$ where the non-overlined letters of $\al$ are shifted by $q$ to get $\widetilde{\al}$ and the overlined (resp. non-overlined) letters of $\beta$ are shifted by $p$ (resp. by $p+n$) to get $\widehat{\be}$. For instance, let $\al = 32\overline{1}$, $\beta = \overline{1}2$, then $\widetilde{\al} = 43\overline{1}$, $\widehat{\be} = \overline{2}5$ and  $$\al \shuffle \beta =  43\overline{1}\overline{2}5 + 43\overline{2}\overline{1}5+4\overline{2}3\overline{1}5+\overline{2}43\overline{1}5+43\overline{2}5\overline{1}+4\overline{2}35\overline{1}+\overline{2}435\overline{1}+4\overline{2}53\overline{1}+\overline{2}453\overline{1}+\overline{2}543\overline{1}.$$
One can see that despite the fact that $\alpha$ and $\beta$ belong to $S_n^{(p)}$ and $S_r^{(q)}$, not all the permutations in $\al \shuffle \beta$ fulfil the conditions of proper order of the overlined letters. We have for $\alpha, \beta \in S_n^{(p)} \times S_r^{(q)}$,
\begin{equation}
\widetilde{F}_{comp(\al)}(X)\widetilde{F}_{comp(\beta)}(X) = \sum_{\gamma \in \al \shuffle \beta}\Gamma (\gamma).
\end{equation}
In particular for $n=r=0$
\begin{equation}
\label{eq : FFF}
\widetilde{F}_{(0^p)}(X)\widetilde{F}_{(0^q)}(X) = \sum_{j=0}^p(-1)^j\binom{p}{j}\binom{p+q-j}{p}\widetilde{F}_{(0^{p+q-j})}(X).
\end{equation}
In some sense, weak composition fundamental quasisymmetric functions are a specialisation of {\it Poirier quasisymmetric functions} (see \cite{Poi98}), a classical type B generalisation of Gessel's quasisymmetric functions that we do not detail here. In \cite{AdiAthEliRoi15}, the authors provide a connexion between the generating functions for Young {\it bi-tableaux} and the fundamental quasisymmetric functions of Poirier. As a result, there should be a natural bi-tableaux interpretation of weak composition quasisymmetric functions. This is the focus of the next section. 

 \subsection{Tableau-theoretic interpretation}
Let $\la$ be an integer partition and $p$ a non-negative integer. We consider pairs of Young tableau $(T^-, T^+)$ where $T^-$ is of shape $(p)$ and $T^+$ has shape $\la$. We call such a bi-tableau a $(p)$-tableau. We proceed with the formal definition. 

\begin{defn}
For any couple of non-negative integer $p,n$ and an integer partition $\la \vdash n$, a {\bf semistandard $(p)$-tableau} $T=(T^-, T^+)$ is a pair of Young diagrams of bi-shape $((p), \la)$ whose boxes are filled with the integers in $[M]$ such that the entries are strictly increasing down the columns and non-decreasing along the rows. The {\bf shape} of $T$ is noted $sh(T) = ((p), \la)$ and we denote $SSBT^{(p)}(\la)$ the set of all $(p)$-tableaux of shape $((p), \la)$. If the boxes are filled with integers in $[n+p]$ all used exactly once, we call $T$ a {\bf standard $(p)$-tableau} and we denote  $SBT^{(p)}(\la)$ the set of such tableaux.
\end{defn}
\noindent We also need the following notions. The {\bf standardisation} $T^{st}$ of a semistandard $(p)$-tableau $T$ is the standard $(p)$-tableau of shape $((p), \la)$ obtained by relabelling the boxes of $T$ with all the integers in $[|\la|+p]$ such that the order of the labels is preserved. When a label in $T$ is used both in $T^-$ and $T^+$, the boxes of $T^-$ are relabelled first. Finally, given a standard $(p)$-tableau $T \in SBT^{(p)}(\la)$ with $|\la| = n$ denote $comp(T)$ the weak composition of $n$,  $comp(T) = (0^{i_1}, s_1, 0^{i_2}, s_2,\ldots, 0^{i_k}, s_k, 0^{i_{k+1}})$, where $i_1$ is the number of consecutive labels $k=1,2\ldots,i_1$ in $T^-$ ($i_1 = 0$ if $1$ is in $T^+$). Then $s_1$ is the number of consecutive labels $k = i_1+1, \cdots, i_1+s_1$ in $T^+$ such that no label $k$ is in a higher row than $k+1$. Then $i_2$ is the number of consecutive labels $k=i_1+s_1+1,2\ldots,i_1+s_1+i_2$ in $T^-$, etc...

\begin{example} 
The following semistandard $(3)$-tableau belongs to $SSBT^{(3)}((2,2,1))$.
$$
T = \left(T^- =
\begin{matrix} 
\resizebox{!}{0.6cm}{%
\begin{tikzpicture}[node distance=0 cm,outer sep = 0pt]
	      \node[bsq] (1) at (0,  0) {\bf \Large 2};
	      \node[bsq] (2) [right = of 1] {\bf \Large 4};
	      \node[bsq] (3) [right = of 2] {\bf \Large 4};	
\end{tikzpicture}
}
\end{matrix},~ 
T^+ =
\begin{matrix} 
\resizebox{!}{1.8cm}{%
\begin{tikzpicture}[node distance=0 cm,outer sep = 0pt]
	      \node[bsq] (1) at (0,  0) {\bf \Large 1};
	      \node[bsq] (2) [right = of 1] {\bf \Large 4};
	      \node[bsq] (3) [below = of 1] {\bf \Large 5};
	      \node[bsq] (4) [right = of 3] {\bf \Large 5};
	      \node[bsq] (5) [below = of 3] {\bf \Large 9};
\end{tikzpicture}
}
\end{matrix}
\right)
$$
Furthermore, its standardisation is:
$$
T^{st} = \left(T^- =
\begin{matrix} 
\resizebox{!}{0.6cm}{%
\begin{tikzpicture}[node distance=0 cm,outer sep = 0pt]
	      \node[bsq] (1) at (0,  0) {\bf \Large 2};
	      \node[bsq] (2) [right = of 1] {\bf \Large 3};
	      \node[bsq] (3) [right = of 2] {\bf \Large 4};	
\end{tikzpicture}
}
\end{matrix},~ 
T^+ =
\begin{matrix} 
\resizebox{!}{1.8cm}{%
\begin{tikzpicture}[node distance=0 cm,outer sep = 0pt]
	      \node[bsq] (1) at (0,  0) {\bf \Large 1};
	      \node[bsq] (2) [right = of 1] {\bf \Large 5};
	      \node[bsq] (3) [below = of 1] {\bf \Large 6};
	      \node[bsq] (4) [right = of 3] {\bf \Large 7};
	      \node[bsq] (5) [below = of 3] {\bf \Large 8};
\end{tikzpicture}
}
\end{matrix}
\right)
$$
Finally the weak composition of $T^{st}$ is $comp(T^{st}) = (1,0^3,1,2,1)$.
\end{example}

\noindent For $\la \vdash n$ and $p \geq 0$ denote $s_\la^{(p)}$ the generating function for such tableaux. We have

\begin{equation}
\label{eq : genfuncbt}
s_\la^{(p)}(X)= \sum_{T \in SSBT^{(p)}(\la)}X^{T^+}.
\end{equation}

\noindent Recall the notation of the Schur symmetric function $s_\la$ indexed by $\la$. The following lemma is immediate. 
\begin{lem}
For any $\la \vdash n$ and $p \geq 0$, one has
\begin{equation}
s_\la^{(p)}(X)= \binom{M+p-1}{p}s_\la(X).
\end{equation}
\end{lem}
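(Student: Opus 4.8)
The plan is to show that a semistandard $(p)$-tableau $T=(T^-,T^+)$ is exactly the data of a semistandard Young tableau $T^+\in SSYT(\la)$ together with an \emph{independent} choice of a semistandard filling $T^-$ of the single-row shape $(p)$ with entries in $[M]$, and then to count the latter. Concretely, the definition imposes that $T^-$ has entries in $[M]$ that are non-decreasing along its unique row and strictly increasing down its (length-one) columns; the column condition is vacuous, so $T^-$ is just a weakly increasing word of length $p$ over the alphabet $[M]$. Crucially, there is no interaction condition between $T^-$ and $T^+$ in the definition of $SSBT^{(p)}(\la)$ (the bi-shape is a disjoint pair of diagrams), so the set factors as a Cartesian product $SSBT^{(p)}(\la)\cong \{\text{weakly increasing words of length }p\text{ over }[M]\}\times SSYT(\la)$.

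First I would record the elementary count of weakly increasing words of length $p$ over $[M]$: by the standard stars-and-bars argument this number is $\binom{M+p-1}{p}$. Second, I would observe that the generating function in \eqref{eq : genfuncbt} only tracks the content of $T^+$, via the monomial $X^{T^+}$, and is completely insensitive to $T^-$. Therefore, summing over the product set, the $T^-$ factor contributes only its cardinality and pulls out as a scalar:
\begin{equation}
s_\la^{(p)}(X)=\sum_{T\in SSBT^{(p)}(\la)}X^{T^+}=\Bigl(\sum_{T^-}1\Bigr)\sum_{T^+\in SSYT(\la)}X^{T^+}=\binom{M+p-1}{p}\,s_\la(X),
\end{equation}
where the last equality uses the defining identity $s_\la(X)=\sum_{T^+\in SSYT(\la)}X^{T^+}$ from \eqref{eq : ST}. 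This is the claimed formula.

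There is essentially no obstacle here; the only point requiring a moment's care is the bookkeeping of the bijection $SSBT^{(p)}(\la)\cong(\text{row fillings})\times SSYT(\la)$, namely verifying that the conditions in the definition of a semistandard $(p)$-tableau decouple into ``$T^-$ is an arbitrary semistandard filling of $(p)$'' and ``$T^+$ is an arbitrary semistandard filling of $\la$'' with no cross-constraints. Once this is granted, the Cauchy/stars-and-bars count and the factorisation of the sum are routine, which is why the lemma is stated as immediate. (One could alternatively phrase this via $s_{(p)}^{(0)}$-type reasoning, noting $\sum_{T^-\in SSYT((p))}1=\binom{M+p-1}{p}$ is the number of monomials of degree $p$ in $M$ variables, i.e. the number of semistandard tableaux of a single row of length $p$ on $[M]$.)
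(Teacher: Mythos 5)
Your proof is correct and is exactly the argument the paper has in mind (the paper states the lemma without proof, calling it immediate): the bi-tableau conditions decouple, the generating function ignores $T^-$, and the $\binom{M+p-1}{p}$ weakly increasing fillings of the row $(p)$ over $[M]$ pull out as a scalar multiplying $s_\la(X)$.
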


\noindent We can now state the main result of this section.
\begin{prop}
\label{prop : stf}
For non-negative integers $p$ and $n$ and an integer partition $\la \vdash n$ the generating function for $(p)-tableau$ of shape $((p), \la)$ is related to weak composition fundamental quasisymmetric functions through
\begin{equation}
s_\la^{(p)}(X)= \sum_{T \in SBT^{(p)}(\la)}\widetilde{F}_{comp(T)}(X).
\end{equation}
\end{prop}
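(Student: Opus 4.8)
The plan is to mimic the route taken for the classical identity \eqref{eq : ST}: partition the semistandard objects according to their standardisation and show that each resulting fibre contributes exactly one weak composition fundamental quasisymmetric function. Every $S \in SSBT^{(p)}(\la)$ has a well-defined standardisation $S^{st} \in SBT^{(p)}(\la)$ of the same shape, so \eqref{eq : genfuncbt} can be rewritten as
\begin{equation}
\label{eq : planfibres}
s_\la^{(p)}(X) = \sum_{S \in SSBT^{(p)}(\la)} X^{S^+} = \sum_{T \in SBT^{(p)}(\la)}\; \sum_{\substack{S \in SSBT^{(p)}(\la) \\ S^{st} = T}} X^{S^+},
\end{equation}
and the proposition follows once we prove the per-tableau identity $\sum_{S:\, S^{st} = T} X^{S^+} = \widetilde{F}_{comp(T)}(X)$ for each fixed $T$.

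To prove this identity I would set up a weight-preserving bijection between the fibre $\{ S \in SSBT^{(p)}(\la) : S^{st} = T \}$ and the index set of the sum defining $\widetilde{F}_{comp(T)}(X)$ in \eqref{eq : defWCF}, namely the weakly increasing words $1 \le w_1 \le \cdots \le w_{n+p} \le M$ satisfying $w_t < w_{t+1}$ whenever $t \in D(comp(T))$. The bijection sends $S$ to the word $w$ whose $t$-th letter is the entry of $S$ in the cell carrying the label $t$ in $T$; its inverse fills the cell of $T$ labelled $t$ with the integer $w_t$. It is weight-preserving because the nonzero positions of $comp(T)$ are exactly the labels lying in $T^+$, so $\prod_{t :\, \delta_t = 1} x_{w_t} = X^{S^+}$.

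The substance of the proof — and the step I expect to be the main obstacle — is checking that both maps are well defined, since this is where the definition of $comp(T)$, the standardisation tie-breaking rule (boxes of $T^-$ relabelled before those of $T^+$), and the bi-shape must all be reconciled. For $S \mapsto w$: weak monotonicity of $w$ is immediate because standardisation relabels cells in increasing order of entries; and if $t \in D(comp(T))$ then $t$ ends an $s$-block, so the label $t$ lies in $T^+$ while $t+1$ lies either in $T^-$ or in $T^+$ in a strictly higher row, and in either case an equality $w_t = w_{t+1}$ would reverse the relative order in which standardisation assigns the labels $t$ and $t+1$ — by the $T^-$-first rule in the first case, and by column strictness of $S^+$ together with the fact that equal entries of a semistandard filling lie in distinct columns with the cell in the higher row in the larger column in the second — a contradiction. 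For $w \mapsto S$: the row conditions of $S$ follow immediately from monotonicity of $w$ (note that $S^-$ has the single-row shape $(p)$ and so carries no column condition), whereas column strictness of $S^+$ is the delicate point: if one cell lies immediately above another in $T^+$ with labels $t < u$ respectively, then were none of $t, t+1, \dots, u-1$ a descent of $comp(T)$, the labels $t, \dots, u$ would all lie in $T^+$ with non-increasing row index, putting the lower cell weakly above the upper one, which is absurd; hence some position of $D(comp(T))$ lies in $[t, u-1]$ and $w_t < w_u$. Finally one verifies $S^{st} = T$ by showing that within each maximal block of equal letters of $w$ the tableau $T$ already labels the corresponding cells in the order prescribed by standardisation — cells of $T^-$ first, read left to right, then cells of $T^+$ in order of increasing column — which again uses that such a block contains no position of $D(comp(T))$, so involves no step from $T^+$ into $T^-$ and no descent inside $T^+$.

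Granting the bijection, summing \eqref{eq : planfibres} over $T \in SBT^{(p)}(\la)$ gives the proposition. The degenerate cases $p = 0$, which recovers \eqref{eq : ST}, and $\la = \emptyset$, where the fibre over the unique $(p)$-tableau has generating function $\binom{M + p - 1}{p}$ and $comp(T) = (0^p)$, are useful consistency checks.
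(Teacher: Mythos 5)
Your proposal is correct and follows essentially the same route as the paper's (sketched) proof: sort the semistandard $(p)$-tableaux by their standardisation, establish the per-tableau identity $\sum_{S:\,S^{st}=T}X^{S^+}=\widetilde{F}_{comp(T)}(X)$, and sum over $SBT^{(p)}(\la)$; you merely supply the bijective details the paper omits. (One small slip: when $t\in D(comp(T))$ with $t$ and $t+1$ both in $T^+$, it is $t$ that lies in the strictly higher row, not $t+1$, though your contradiction via the horizontal-strip and standardisation-order argument goes through unchanged.)
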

\begin{proof}(sketch)
Starting from Equation \eqref{eq : genfuncbt}, sort all the semi-standard $(p)$-tableaux according to their standardisation. Then, given a standard $(p)$-tableau $T_0$ show that $$\widetilde{F}_{comp(T_0)}(X) = \sum_{T \in SSBT^{(p)}(\la); T^{st} = T_0} X^{T^+}.$$
Finally, sum on $SBT^{(p)}(\la)$ to get the result.
\end{proof}
\subsection{Product formulas}
We use Proposition \ref{prop : stf} to derive new results. 
We show the following formula.
\begin{thm}
\label{thm : mainWC}
Let $p,q,n,m$ be non-negative integers and $\la, \mu$ integer partitions of $n$ and $m$. For $\nu \vdash |\la| + |\mu|$ recall the coefficients $c^\nu_{\la, \mu}$ introduced in Section \ref{sec : prodSchur}. The following identity holds.
\begin{equation}
\sum_{\substack{T\in SBT^{(p)}(\la)\\ U \in SBT^{(q)}(\mu)}}\widetilde{F}_{comp(T)}(X)\widetilde{F}_{comp(U)}(X) = \hspace{-8mm}\sum_{\substack{0\leq j \leq p\\ \nu \vdash |\la|+|\mu| \\ V \in SBT^{(p+q-j)}(\nu)}}\hspace{-8mm}(-1)^j\binom{p}{j}\binom{p+q-j}{p}c^{\nu}_{\la\mu}\widetilde{F}_{comp(V)}(X).
\end{equation}
\end{thm}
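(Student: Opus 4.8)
The strategy is to combine the tableau-theoretic identity of Proposition~\ref{prop : stf} with the type A product formula of Section~\ref{sec : prodSchur} and the multiplication rule \eqref{eq : FFF} for the ``all-zero'' weak compositions. First I would apply Proposition~\ref{prop : stf} to each factor on the left-hand side to write $s_\la^{(p)}(X)s_\mu^{(q)}(X)$ for the sum $\sum_{T,U}\widetilde{F}_{comp(T)}(X)\widetilde{F}_{comp(U)}(X)$. The previous lemma gives $s_\la^{(p)}(X)=\binom{M+p-1}{p}s_\la(X)$ and $s_\mu^{(q)}(X)=\binom{M+q-1}{q}s_\mu(X)$, so the product equals $\binom{M+p-1}{p}\binom{M+q-1}{q}s_\la(X)s_\mu(X)$. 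The point is that the binomial prefactors are themselves generating functions: $\binom{M+p-1}{p}=\widetilde F_{(0^p)}(X)$ specialised appropriately, or more precisely $s_\emptyset^{(p)}(X)=\binom{M+p-1}{p}$, so the factor $\binom{M+p-1}{p}\binom{M+q-1}{q}$ should be rewritten via \eqref{eq : FFF} as $\sum_{j=0}^p(-1)^j\binom{p}{j}\binom{p+q-j}{p}\binom{M+p+q-j-1}{p+q-j}$, i.e.\ as $\sum_{j}(-1)^j\binom{p}{j}\binom{p+q-j}{p}s_\emptyset^{(p+q-j)}(X)$.

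\textbf{Main steps.} Carrying this out: (1) expand $s_\la(X)s_\mu(X)=\sum_\nu c^\nu_{\la\mu}s_\nu(X)$ in the Schur basis; (2) multiply by the binomial identity above to obtain
$$
s_\la^{(p)}(X)s_\mu^{(q)}(X)=\sum_{\substack{0\le j\le p\\ \nu\vdash|\la|+|\mu|}}(-1)^j\binom{p}{j}\binom{p+q-j}{p}c^\nu_{\la\mu}\,\binom{M+p+q-j-1}{p+q-j}s_\nu(X);
$$
(3) recognise each summand $\binom{M+p+q-j-1}{p+q-j}s_\nu(X)=s_\nu^{(p+q-j)}(X)$ by the lemma; (4) apply Proposition~\ref{prop : stf} in the other direction to each $s_\nu^{(p+q-j)}(X)=\sum_{V\in SBT^{(p+q-j)}(\nu)}\widetilde F_{comp(V)}(X)$; and (5) apply Proposition~\ref{prop : stf} once more on the left to replace $s_\la^{(p)}(X)s_\mu^{(q)}(X)$ by $\sum_{T,U}\widetilde F_{comp(T)}(X)\widetilde F_{comp(U)}(X)$. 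Matching the two expressions yields the claimed identity.

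\textbf{Expected obstacle.} The routine parts are the manipulations of generating functions; the one place requiring care is justifying the purely binomial identity $\binom{M+p-1}{p}\binom{M+q-1}{q}=\sum_{j=0}^p(-1)^j\binom{p}{j}\binom{p+q-j}{p}\binom{M+p+q-j-1}{p+q-j}$, which is exactly the content of \eqref{eq : FFF} read through the specialisation $\widetilde F_{(0^p)}(X)\mapsto\binom{M+p-1}{p}$ (the number of weakly increasing length-$p$ words on an $M$-letter alphabet). I would verify that \eqref{eq : FFF} indeed specialises this way — since $\widetilde F_{(0^p)}(X)=\sum_{1\le i_1\le\cdots\le i_p\le M}1=\binom{M+p-1}{p}$ directly from \eqref{eq : defWCF} — and then the theorem follows formally. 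An alternative, slightly more self-contained route avoiding the lemma is to work directly with the shuffle-product formula for $\widetilde F_{comp(T)}\widetilde F_{comp(U)}$ from the previous subsection, but threading the non-positivity of the $\Gamma(\gamma)$ expansion through makes the bookkeeping heavier, so the generating-function argument above is cleaner.
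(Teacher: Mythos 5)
Your proposal is correct and follows essentially the same route as the paper: rewrite both sides via Proposition~\ref{prop : stf} and the lemma $s_\la^{(p)}=\binom{M+p-1}{p}s_\la$, expand $s_\la s_\mu$ in the Schur basis, and convert the binomial prefactor using the specialisation $\widetilde F_{(0^p)}(X)=s_\emptyset^{(p)}=\binom{M+p-1}{p}$ of Equation~\eqref{eq : FFF}. Your extra remark verifying that $\widetilde F_{(0^p)}(X)=\binom{M+p-1}{p}$ on the finite alphabet is exactly the justification the paper's sketch leaves implicit.
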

\begin{proof}(sketch)
First, note that for any integer $p$ $$s^{(p)}_\emptyset = \widetilde{F}_{(0^p)} = \binom{M+p-1}{p}.$$ Then use Equation \eqref{eq : FFF} and the definition of the coefficients $c^{\nu}_{\la\mu}$ to write
\begin{align*}
s^{(p)}_\la s^{(q)}_\mu &= \sum_{\nu \vdash |\la|+|\mu|}c^\nu_{\la \mu} \binom{M+p-1}{p}\binom{M+q-1}{q}s_\nu,\\
&=\sum_{\nu \vdash |\la|+|\mu|}c^\nu_{\la \mu}\sum_{j=0}^p(-1)^j\binom{p}{j}\binom{p+q-j}{p}\binom{M+p+q-j-1}{p+q-j}s_\nu,\\
&=\sum_{\nu \vdash |\la|+|\mu|}c^\nu_{\la \mu}\sum_{j=0}^p(-1)^j\binom{p}{j}\binom{p+q-j}{p}s^{(p+q-j)}_\nu,
\end{align*}
and apply Proposition \ref{prop : stf} to get the result. 
\end{proof}
There is an interesting corollary of Theorem \ref{thm : mainWC} in terms of generating functions for $P_\pi$-partitions. For a (classical) standard Young tableau $T$, recall the permutation class $C_T$ of Section \ref{sec : prodSchur}. Given a non-negative integer $p$, we denote $\eps_p$ the coloured permutation $\eps_p = \overline{12\cdots p}$. The result can be stated as follows.
\begin{cor} For non-negative integers $p,q$, integers partitions $\la$ and $\mu$ and $T \in SYT(\la)$, $U \in SYT(\mu)$, one has:
\begin{equation}
\sum_{\gamma \in (\eps_p \shuffle C_T) \shuffle (\eps_q \shuffle C_U)}\Gamma(\gamma) = \hspace{-0mm}\sum_{\substack{0\leq j \leq p\\ \nu \vdash |\la|+|\mu| \\ V \in SBT^{(p+q-j)}(\nu)}}\hspace{-0mm}(-1)^j\binom{p}{j}\binom{p+q-j}{p}c^{\nu}_{\la\mu}\widetilde{F}_{comp(V)}(X).
\end{equation}
\end{cor}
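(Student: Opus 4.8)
The plan is to show that the left-hand side of the identity equals $s^{(p)}_\la(X)\,s^{(q)}_\mu(X)$; once this is done, the right-hand side follows by reading the chain of equalities in the proof of Theorem~\ref{thm : mainWC} backwards, i.e.\ expanding $s^{(p)}_\la s^{(q)}_\mu$ via \eqref{eq : FFF}, the definition of the $c^\nu_{\la\mu}$, and Proposition~\ref{prop : stf}. So the whole statement reduces to an identity between generating functions for $P_\pi$-partitions.

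The first step is the one-sided case: I claim that $\sum_{\gamma\in\eps_p\shuffle C_T}\Gamma(\gamma)=s^{(p)}_\la(X)$. Here $\eps_p\in S_0^{(p)}$ with $comp(\eps_p)=(0^p)$, so $\Gamma(\eps_p)=\widetilde F_{(0^p)}(X)=\binom{M+p-1}{p}$, while every $\pi\in C_T$ has $comp(\pi)$ equal to the genuine (zero-free) composition of $n$ whose descent set is $Des(\pi)$, hence $\Gamma(\pi)=\widetilde F_{comp(\pi)}(X)=F_{Des(\pi)}(X)$. Since $\eps_p$ is the increasing overlined word, every $\gamma$ in $\eps_p\shuffle\pi$ has its overlined letters in increasing order, hence lies in $S_n^{(p)}$, so the product formula for weak composition fundamental quasisymmetric functions applies and yields
$$\sum_{\gamma\in\eps_p\shuffle\pi}\Gamma(\gamma)=\widetilde F_{(0^p)}(X)\,F_{Des(\pi)}(X)=\binom{M+p-1}{p}F_{Des(\pi)}(X).$$
Summing over $\pi\in C_T$, using the descent-preserving bijection between $C_T$ and $SYT(\la)$ together with \eqref{eq : ST}, and then the Lemma of this section, gives $\binom{M+p-1}{p}s_\la(X)=s^{(p)}_\la(X)$. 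The same argument gives $\sum_{\gamma\in\eps_q\shuffle C_U}\Gamma(\gamma)=s^{(q)}_\mu(X)$.

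For the second step I iterate. Every $\al\in\eps_p\shuffle C_T$ belongs to $S_n^{(p)}$ (its overlined letters are $\overline 1,\ldots,\overline p$, in order) and every $\be\in\eps_q\shuffle C_U$ belongs to $S_m^{(q)}$, so $\Gamma(\al)=\widetilde F_{comp(\al)}(X)$ and $\Gamma(\be)=\widetilde F_{comp(\be)}(X)$. Applying the product formula once more to each pair $(\al,\be)$ and summing, the left-hand side of the Corollary becomes
$$\Big(\sum_{\al\in\eps_p\shuffle C_T}\widetilde F_{comp(\al)}(X)\Big)\Big(\sum_{\be\in\eps_q\shuffle C_U}\widetilde F_{comp(\be)}(X)\Big)=s^{(p)}_\la(X)\,s^{(q)}_\mu(X)$$
by the first step, and the proof concludes as explained in the first paragraph.

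The main obstacle is the bookkeeping with the colour-preserving shuffle conventions, and it appears in two ways. First, to use $\widetilde F_{comp(\cdot)}=\Gamma(\cdot)$ at each stage one must check that every coloured permutation produced by a shuffle is admissible (overlined letters increasing); this is precisely why $\eps_p$ must be the increasing overlined word $\overline{12\cdots p}$. Second, the generating-function sums must be taken with the correct multiplicities: the shift conventions in $\al\shuffle\be$ place the letters inherited from $\eps_p\shuffle C_T$ and from $\eps_q\shuffle C_U$ on disjoint value intervals (and, inside the inner shuffle, the letters inherited from $\eps_p$ and from $C_T$ are likewise separated), so each $\gamma$ occurring in the iterated shuffle is recovered from a unique pair $(\al,\be)$, from unique $\pi\in C_T$, $\rho\in C_U$, and from a unique interleaving. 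Hence all the unions involved are genuine disjoint unions and no term $\Gamma(\gamma)$ is miscounted.
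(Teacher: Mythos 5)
The paper omits its own proof of this corollary (``We omit the proof in this extended abstract''), but your argument is correct and is the natural completion of the framework the paper sets up: two applications of the shuffle product formula for the $\Gamma(\gamma)$ reduce the left-hand side to $s_\la^{(p)}(X)\,s_\mu^{(q)}(X)$, which Theorem~\ref{thm : mainWC} together with Proposition~\ref{prop : stf} identifies with the right-hand side. Your checks of the two delicate points --- that every coloured permutation arising in $\eps_p\shuffle\pi$ (resp.\ in $\eps_p\shuffle C_T$) has its overlined letters in increasing order so that $\Gamma=\widetilde F_{comp}$ applies, and that the disjoint value ranges created by the shift conventions make each $\gamma$ arise from a unique pair and interleaving --- are exactly what is needed for the double sum to factor.
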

\begin{proof}
We omit the proof in this extended abstract.
\end{proof}

\section{Type B extension to Chow's quasisymmetric functions}
\subsection{Signed permutations and domino tableaux}
Let $B_n$ be the {\bf hyperoctahedral group} of order $n$. $B_n$ is composed of all signed permutations $\pi$ on $\{\text{-}n, \cdots,\text{-}2, \text{-}1, 0, 1, 2, \cdots, n\}$ such that $\pi(-i) = -\pi(i)$ for all $i$. The {\bf total colour} of $\pi \in B_n$ is the number $tc(\pi) = |\{i\geq 1; \pi(i)<0\}|$. Its {\bf descent set} is the subset of $\{0\}\cup[n-1]$ equal to $Des(\pi) = \{0 \leq i \leq n-1 \mid \pi(i) > \pi(i+1)\}$. The main difference with respect to the case of the symmetric group is the possible descent in position $0$ when $\pi(1)$ is a negative integer.\\ 
For $\la \vdash 2n$, a {\bf standard domino tableau} $T$ of shape $\la$ is a Young diagram of shape $shape(T)=\la$ tiled by {\bf dominoes}, i.e. $2\times1$ or $1\times2$ rectangles filled with the elements of $[n]$ such that the entries are strictly increasing along the rows and down the columns. In the sequel we consider only the set $\mathcal{P}^0(n)$ of {\it empty $2$-core partitions} $\la \vdash 2n$ that fit such a tiling. A standard domino tableau $T$ has a descent in position $i>0$ if $i+1$ lies strictly below $i$ in $T$ and has descent in position $0$ if the domino filled with $1$ is vertical. We denote $Des(T)$ the set of all its descents. A {\bf semistandard domino tableau} T of shape $\la \in \mathcal{P}^0(n)$ and weight $w(T) = \mu =(\mu_0, \mu_1, \mu_2,\cdots)$ with $\mu_i \geq 0$ and $\sum_i \mu_i = n$ is a tiling of the Young diagram of shape $\la$ with horizontal and vertical dominoes labelled with integers in $\{0,1,2,\cdots\}$ such that labels are non decreasing along the rows, strictly increasing down the columns and exactly $\mu_i$ dominoes are labelled with $i$. 
If the top leftmost domino is vertical, {\bf it cannot be labelled $\bf 0$}. Denote $SDT(\la)$ (resp. $SSDT(\la)$) the set of standard (resp. semistandard) domino tableaux of shape $\la$. 
Finally, we denote $sp(T)$, the {\bf spin} of (semi-) standard domino tableau $T$, i.e. {\bf half the number} of its vertical dominoes. 
\begin{remark}
\label{rem : zero}
The possible labelling of top leftmost horizontal dominoes in semistandard domino tableaux with $0$ was first introduced by the authors in \cite{MayVas17} and is essential to connect their generating functions with Chow's quasisymmetric functions.
\end{remark}
\begin{figure} [h]
\begin{center}
$$
T_1 =
\begin{matrix}
\resizebox{!}{2.7 cm}{%
\begin{tikzpicture}[node distance=0 cm,outer sep = 0pt]
        \node[bver] (1) at ( 0,   0) {\bf \Large 1};
        \node[bver] (2) [right = of 1] {\bf \Large 2};
        \node[bver] (3) [right = of 2] {\bf \Large 3};         
        \node[bhor] (4) at ( 0.5,   -1.5) {\bf \Large 4}; 
        \node[bhor] (5) at ( 3.5,   0.5) {\bf \Large 5};        
        \node[bhor] (6) [below = of 5] {\bf \Large 6};      
        \node[bver] (7) at ( 0,   -3) {\bf \Large 7};       
        \node[bhor] (8) [right = of 4] {\bf \Large 8};
\end{tikzpicture}  
}
\end{matrix}
~~~~~~~
T_2 = 
\begin{matrix}
\resizebox{!}{2.7 cm}{%
\begin{tikzpicture}[node distance=0 cm,outer sep = 0pt]
        \node[bver] (1) at ( 0,   0) {\bf \Large 1};
        \node[bver] (2) [right = of 1] {\bf \Large 2};
        \node[bhor] (3) at ( 2.5,   0.5) {\bf \Large 3};           
        \node[bhor] (4) [below = of 3] {\bf \Large 4}; 
        \node[bver] (5) at ( 4,   0) {\bf \Large 5};        
        \node[bhor] (6) at ( 0.5,   -1.5) {\bf \Large 6};  
        \node[bver] (7) at ( 0,   -3) {\bf \Large 7};      
        \node[bhor] (8) [right = of 6] {\bf \Large 8};
\end{tikzpicture}  
}
\end{matrix}
~~~~~~~
T_3 = 
\begin{matrix}
\resizebox{!}{2.7 cm}{%
\begin{tikzpicture}[node distance=0 cm,outer sep = 0pt, line width=1pt]
        \node[bhor] (1) at ( 0,   0) {\bf \Large 0};
        \node[bhor] (2) [right = of 1] {\bf \Large 0};
        \node[bver] (3) at ( -0.5,   -1.5) {\bf \Large 2};
        \node[bhor] (4) at ( 1,   -1) {\bf \Large 2};     
        \node[bver] (5) [below = of 3] {\bf \Large 5};    
        \node[bhor] (6) [below = of 4] {\bf \Large 5};       
        \node[bver] (7) at ( 2.5,   -1.5) {\bf \Large 5};    
        \node[bver] (8) at ( 3.5,   -0.5) {\bf \Large 5};           
        \node[bhor] (9) [below = of 6] {\bf \Large 7};
\end{tikzpicture}  
}
\end{matrix}
$$
\end{center}
\caption{
\footnotesize
Two standard domino tableaux $T_1$ and $T_2$ of shape $(5,5,4,1,1)$, descent set \{0,3,5,6\}, a semistandard tableau $T_3$ of shape $(5,5,4,3,1)$ and weight $\mu=(2,0,2,0,0,4,0,1)$. All of the tableaux have a spin of 2. 
}
\label{fig : DTs}
\end{figure}
\subsection{Chow's type B quasisymmetric functions}
Chow defines in \cite{Cho01} another Type B extension of Gessel's algebra of quasisymmetric functions that is dual to Solomon's descent algebra of type B.\\
Let $X = \{\cdots,x_{\text{-}i},\cdots, x_{\text{-}1}, x_0, x_1,\cdots, x_i,\cdots\}$ be an alphabet of totally ordered commutative indeterminates  with the assumption that $x_{\text{-}i} = x_i$ and let $I$ be a subset of $\{0\} \cup [n-1]$, he defines a type B analogue of the fundamental quasisymmetric functions 
\begin{eqnarray*}
F_I^B(X) =  \sum_{\substack{0 = i_0\leq i_1\leq i_2\leq \ldots \leq i_n\\ j\in I \Rightarrow i_j < i_{j+1}}} x_{i_1}x_{i_2}\ldots x_{i_n}.
\end{eqnarray*}
Note the particular rôle of the variable $x_0$.\\
Given two signed permutations $\al \in B_n$ and $\beta \in B_m$, denote $\al \shuffle \beta$ the set of signed permutations in $B_{m+n}$ obtained by shuffling the letters of $\al$ and $\overline{\beta}$ where we shift the absolute value of the $\beta_i$ to get $\overline{\beta}$. Chow shows in \cite[Prop. 2.2.6]{Cho01}
\begin{equation}
F_{Des(\al)}^B(X)F_{Des(\beta)}^B(X) = \sum_{\gamma \in \al \shuffle \beta}F^B_{Des(\gamma)}(X).
\end{equation}
Furthermore, we introduce in \cite{MayVas17} modified generating functions for domino tableaux called domino functions. Given a semistandard domino tableau $T$ of weight $\mu$, denote $X^T$ the monomial $x_0^{\mu_0}x_1^{\mu_1}x_2^{\mu_2}\cdots$. For $\la \in {\mathcal P}^0(n)$ and an additional parameter $q$ we define the {\bf domino function} indexed by $\la$ on alphabet $X$ as
\begin{equation}
\mathcal{G}_\la(X;q)  = \sum\limits_{T \in SSDT(\la)}q^{sp(T)}{X}^{T}.
\end{equation}
\noindent These functions are related to Chow's quasisymmetric functions through
\begin{lem}[\cite{MayVas19}]
\label{lem : GdF}
For $\la \in {\mathcal P}^0(n)$, the $q$-domino function indexed by $\la$ can be expanded in the basis of Chow's quasisymmetric functions as
\begin{equation}
\label{eq : GqF}
\mathcal{G}_\la(X;q)  = \sum\limits_{T \in SDT(\la)}q^{sp(T)}F^B_{Des(T)}(X).
\end{equation}
\end{lem}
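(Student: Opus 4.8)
The plan is to reproduce, at the level of domino tableaux, the passage from the monomial generating function to the fundamental‑quasisymmetric generating function that underlies Equation~\eqref{eq : ST}, using standardisation of semistandard domino tableaux as the organising map. By definition $\mathcal{G}_\la(X;q)=\sum_{T\in SSDT(\la)}q^{sp(T)}X^T$. Standardising a semistandard domino tableau leaves its underlying domino tiling, and in particular the orientation of every domino, unchanged; hence the standardisation map $T\mapsto T^{st}$ sends $SSDT(\la)$ to $SDT(\la)$ and satisfies $sp(T)=sp(T^{st})$. Grouping the semistandard tableaux by their standardisation gives
\[
\mathcal{G}_\la(X;q)=\sum_{T_0\in SDT(\la)}q^{sp(T_0)}\Bigl(\sum_{\substack{T\in SSDT(\la)\\ T^{st}=T_0}}X^T\Bigr),
\]
so the lemma reduces to proving the identity $\sum_{T^{st}=T_0}X^T=F^B_{Des(T_0)}(X)$ for each fixed standard domino tableau $T_0$.

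To prove this identity I would describe the fibre $\{T:\,T^{st}=T_0\}$ explicitly. Let $d_1,\dots,d_n$ be the dominoes of $T_0$, indexed so that $d_k$ carries the label $k$. A semistandard domino tableau $T$ with $T^{st}=T_0$ amounts to a weakly increasing map $\phi:\{1,\dots,n\}\to\{0,1,2,\dots\}$ (we use only the non‑negative letters of $X$, since $x_{-i}=x_i$) recording the label $\phi(k)$ placed on $d_k$; then $X^T=x_{\phi(1)}x_{\phi(2)}\cdots x_{\phi(n)}$, one factor per domino. The claim is that the admissible $\phi$ are exactly those with $\phi(k)<\phi(k+1)$ whenever $k\in Des(T_0)\cap[n-1]$ (that is, $d_{k+1}$ lies strictly below $d_k$), while $\phi(1)\ge 1$ when $0\in Des(T_0)$ (that is, $d_1$ is vertical) and $\phi(1)\ge 0$ otherwise. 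Granting the claim, set $i_0=0$ and $i_k=\phi(k)$; the fibre sum then becomes $\sum_{0=i_0\le i_1\le\cdots\le i_n,\ j\in Des(T_0)\Rightarrow i_j<i_{j+1}}x_{i_1}\cdots x_{i_n}$, which is precisely $F^B_{Des(T_0)}(X)$, and summing over $T_0$ finishes the proof.

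The remaining work, and the crux, is to show that $T\mapsto(T^{st},\phi)$ is a bijection between $SSDT(\la)$ and the set of pairs $(T_0,\phi)$ with $T_0\in SDT(\la)$ and $\phi$ admissible in the above sense. In the forward direction one fixes the standardisation convention for dominoes sharing a label (break ties by the reading order of their top‑left cells; by the hypothesis that a top‑leftmost vertical domino is never labelled $0$, together with Remark~\ref{rem : zero}, this never creates a spurious descent in position $0$) and checks that the resulting $\phi$ is weakly increasing and strictly increasing exactly across $Des(T_0)$: if $d_{k+1}$ sits strictly below $d_k$ the two dominoes are forced to overlap a common column with $d_{k+1}$ lower, so strict increase down columns forces $\phi(k)<\phi(k+1)$; whereas if $k\notin Des(T_0)$ then giving $d_k$ and $d_{k+1}$ a common label keeps rows non‑decreasing and columns strictly increasing and does not alter the standardisation, so $\phi(k)=\phi(k+1)$ is allowed. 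In the backward direction, given an admissible $\phi$ one relabels $d_k$ by $\phi(k)$ and verifies directly that the output is a genuine semistandard domino tableau whose standardisation is $T_0$ — column strictness coming from the strict jumps at descents, and the prohibition of the label $0$ on a top‑leftmost vertical domino coming from the clause $\phi(1)\ge 1$ when $0\in Des(T_0)$.

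I expect this last step to be the main obstacle: making precise the standardisation/descent dictionary for \emph{dominoes} rather than cells. A vertical domino spans two rows and a horizontal one two columns, and two dominoes can constrain one another along a column even when none of their relevant cells share a row, so the bookkeeping behind ``a descent of $T_0$ is equivalent to a forced strict increase of the semistandard label'' is more delicate than in the classical Young tableau case; in addition the distinguished position $0$ and the identification $x_0=x_{-0}$ must be threaded carefully through both directions of the bijection. Once this dictionary is established, the rest is exactly the reorganisation of a generating function carried out in Section~1.2.
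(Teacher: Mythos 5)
The paper itself contains no proof of Lemma~\ref{lem : GdF}: it is imported from \cite{MayVas19}, and the only internal indication of how such statements are proved is the sketch of Proposition~\ref{prop : stf}, which uses exactly the standardisation strategy you adopt. Your overall plan --- group $SSDT(\la)$ by standardisation, note that standardisation preserves the tiling and hence the spin, and identify each fibre sum with $F^B_{Des(T_0)}(X)$ --- is the right one and is consistent with the paper's methodology.

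There is, however, a concrete flaw in the one justification you offer for the crucial implication ``$k\in Des(T_0)$ forces $\phi(k)<\phi(k+1)$.'' You claim that if $d_{k+1}$ lies strictly below $d_k$ then the two dominoes must overlap in a column, so that column-strictness forces the strict inequality. This is false, and the paper's own Figure~\ref{fig : DTs} refutes it: in $T_1$ the domino labelled $6$ is horizontal in row $2$ (columns $4$--$5$) while the domino labelled $7$ is vertical in column $1$ (rows $4$--$5$), so $6\in Des(T_1)$ although the two dominoes share no column and no row; nothing local prevents giving them equal semistandard labels. What actually excludes $\phi(k)=\phi(k+1)$ at a descent is the standardisation convention combined with a global fact: equal-labelled dominoes can share no column (by column-strictness), hence are totally ordered left to right, and inside a Young diagram this order runs from weakly lower-left to weakly upper-right (if $A$ is entirely left of $B$ and entirely above $B$, the cell in $B$'s column and $A$'s row yields a contradiction between row-weak-increase and column-strict-increase). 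Therefore, provided ties are broken by \emph{column order} --- not by the row-major ``reading order of top-left cells'' you suggest, which in the $T_1$ example would number the row-$2$ domino first and manufacture a spurious descent --- consecutive standard labels within one value class never create a descent, so a filling with $\phi(k)=\phi(k+1)$ standardises to a tableau \emph{without} a descent at $k$ and lies in a different fibre. You need to replace the local common-column argument by this global statement and fix the tie-breaking rule accordingly; with that repair, the rest of your reduction, including the treatment of the position-$0$ descent via the rule recalled in Remark~\ref{rem : zero}, goes through.
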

  
\subsection{Type B Littlewood-Richardson coefficients}
There is a natural analogue of the RSK-correspondence for signed permutations involving {\bf domino tableaux}. 
Barbash and Vogan (\cite{BarVog82}) built a bijection between signed permutations of $B_n$ and pairs of standard domino tableaux of equal shape in $\mathcal{P}^0(n)$. An independent development on the subject is due to Garfinkle. Ta\c{s}kin (\cite[Prop. 26]{Tas12}) showed that the two standard domino tableaux $T$ and $U$ associated to a signed permutation $\pi$ by the algorithm of Barbash and Vogan have respective descent sets $Des(T) = Des(\pi^{-1})$ and $Des(U) = Des(\pi)$ while Shimozono and White showed in \cite{ShiWhi01} the {\bf colour-to-spin} property i.e. $
tc(\pi) = sp(T) + sp(U)$.\\
Let $X,Y$ be two alphabets of indeterminates. In \cite{MayVas19}, we show that Equation \eqref{eq : GqF} implies the following proposition.
\begin{prop}[\cite{MayVas19}]
\label{prop : CauchyDesB}
For any integer $n$, we have
\begin{align}
\label{eq : CauchyDesB}
\nonumber \sum_{\pi \in B_n}q^{tc(\pi)}&F^B_{Des(\pi)}(X)F^B_{Des(\pi^{-1})}(Y) =\\
& \sum_{\substack{\la \in {\mathcal P}^0(n)\\T,U \in SDT(\la)}}q^{sp(T)+sp(U)}F^B_{Des(T)}(X)F^B_{Des(U)}(Y).
\end{align}
\end{prop}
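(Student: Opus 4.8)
The plan is to mimic the type A derivation of Equation~\eqref{eq : CauchyDes}, replacing Gessel's coproduct and Cauchy formula by their type B analogues. The starting point is a type B Cauchy identity for domino functions. Recall from \cite{MayVas17,MayVas19} that the generating functions $\mathcal{G}_\la(X;q)$, indexed by $\la \in \mathcal{P}^0(n)$, form (after the substitution $x_{-i}=x_i$) a family whose Cauchy kernel is governed by the RSK-type correspondence of Barbash--Vogan; concretely, one has an identity of the form
\begin{equation*}
\sum_{\la \in \mathcal{P}^0(n)} \mathcal{G}_\la(X;q)\,\mathcal{G}_\la(Y;q) \;=\; \sum_{\pi \in B_n} q^{tc(\pi)} \, \big(\text{monomial contribution of }\pi\big),
\end{equation*}
which is the domino/signed-permutation analogue of \eqref{eq : CauchyPrime}. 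First I would establish (or quote) this $q$-Cauchy identity for domino functions on the doubled alphabet, using the colour-to-spin property $tc(\pi)=sp(T)+sp(U)$ of Shimozono--White to match the power of $q$ on the two sides.

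Next I would expand both sides into Chow's fundamental quasisymmetric functions. On the left, applying Lemma~\ref{lem : GdF} to each factor $\mathcal{G}_\la(X;q)$ and $\mathcal{G}_\la(Y;q)$ turns the sum $\sum_{\la}\mathcal{G}_\la(X;q)\mathcal{G}_\la(Y;q)$ into exactly
\[
\sum_{\substack{\la \in \mathcal{P}^0(n)\\ T,U \in SDT(\la)}} q^{sp(T)+sp(U)} F^B_{Des(T)}(X)\,F^B_{Des(U)}(Y),
\]
the right-hand side of \eqref{eq : CauchyDesB}. On the right, I would use the type B coproduct/Cauchy expansion for Chow's functions: the $F^B_I$ are dual to Solomon's descent algebra of type B, so the image of the Cauchy kernel $\sum_\la \mathcal{G}_\la(X;q)\mathcal{G}_\la(Y;q)$ under the diagonal, written in the $F^B$-basis, is $\sum_{\pi \in B_n} q^{tc(\pi)} F^B_{Des(\pi)}(X)F^B_{Des(\pi^{-1})}(Y)$ — this is the signed-permutation analogue of the second equality in \eqref{eq : CauchyDes}, and it follows from Chow's shuffle/coproduct formula together with the fact that for $\pi\in B_n$ the pair of descent sets $(Des(\pi),Des(\pi^{-1}))$ plays the role the pair $(Des(\sigma),Des(\rho))$ with $\sigma\rho=\pi$ plays in type A. Combining the two expansions gives \eqref{eq : CauchyDesB}.

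The main obstacle is the bookkeeping of the parameter $q$ together with the special r\^ole of the $0$-labels and the position-$0$ descent. In ordinary type A the Cauchy kernel has a clean product form and the coproduct is multiplicative; here the doubled-alphabet convention $x_{-i}=x_i$, the exceptional treatment of $x_0$ in $F^B_I$, and the rule forbidding a vertical top-left domino from carrying label $0$ must all be shown to be exactly compatible with the spin statistic so that $q^{sp(T)+sp(U)}$ on the tableau side matches $q^{tc(\pi)}$ on the permutation side. I expect this to reduce, after Lemma~\ref{lem : GdF} is applied, to the colour-to-spin theorem of Shimozono--White plus Ta\c{s}kin's identification of the descent sets $Des(T)=Des(\pi^{-1})$, $Des(U)=Des(\pi)$; once those are in hand the identity is forced term by term. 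An alternative, fully bijective route would be to invoke the Barbash--Vogan correspondence directly: it is a bijection $B_n \leftrightarrow \{(T,U): T,U\in SDT(\la),\ \la\in\mathcal{P}^0(n)\}$ under which the two sides of \eqref{eq : CauchyDesB} are literally equal summand by summand, by Ta\c{s}kin's and Shimozono--White's results. I would present the quasisymmetric-function argument as the main proof (to stay in the spirit of the paper's "direct analytical" approach) and mention the bijective interpretation as a remark.
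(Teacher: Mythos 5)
Your proposal has a genuine logical problem in the context of this paper: both routes you sketch ultimately rest on the bijective results of Barbash--Vogan, Ta\c{s}kin and Shimozono--White, whereas the paper's whole point (stated immediately after the proposition) is that \eqref{eq : CauchyDesB} \emph{provides a direct analytical proof of} those very results --- the existence of the classes $C^B_T$, the descent identities $Des(T)=Des(\pi^{-1})$, $Des(U)=Des(\pi)$, and the colour-to-spin identity $tc(\pi)=sp(T)+sp(U)$ are all \emph{deduced from} Proposition~\ref{prop : CauchyDesB}, not used to prove it. Your ``alternative, fully bijective route'' does prove the identity correctly (term by term under the Barbash--Vogan correspondence, using \cite{Tas12} and \cite{ShiWhi01}), but it inverts the logical order of the paper and would render the subsequent discussion circular. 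More importantly, your ``main'' analytical route does not actually escape this: at the two places where the argument has real content --- establishing the $q$-Cauchy kernel identity and identifying its $F^B$-expansion with $\sum_{\pi\in B_n}q^{tc(\pi)}F^B_{Des(\pi)}(X)F^B_{Des(\pi^{-1})}(Y)$ --- you explicitly invoke colour-to-spin ``to match the power of $q$'' and assert the permutation-side expansion as an ``analogue'' of the type A computation without deriving it. That second step is precisely the hard part: Chow's coproduct formula on the product alphabet, the convention $x_{-i}=x_i$, the special variable $x_0$, and the emergence of the weight $q^{tc(\pi)}$ (there is no $q$ anywhere in the quasisymmetric machinery, so it must be produced by the computation, not imported from \cite{ShiWhi01}) all have to be handled explicitly, and your sketch leaves this as an unproven assertion.

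The intended proof, as the paper indicates, consists of expanding a $q$-Cauchy identity for the domino functions $\mathcal{G}_\la(X;q)$ --- established in \cite{MayVas19} by analytic means, independently of the Barbash--Vogan correspondence --- using Lemma~\ref{lem : GdF} on each factor; the right-hand side of \eqref{eq : CauchyDesB} then appears exactly as in your second step (which is fine), while the left-hand side with its weight $q^{tc(\pi)}$ falls out of the kernel computation itself. To repair your proposal along the paper's lines, you would need to prove the $q$-Cauchy identity and its signed-permutation expansion without appealing to \cite{ShiWhi01} or \cite{Tas12}; as written, the proposal either assumes the conclusion's combinatorial content or leaves the decisive step unjustified.
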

\noindent Proposition \ref{prop : CauchyDesB} provides a direct analytical proof of the properties showed in a bijective fashion by Barbash, Vogan, Ta\c{s}kin, Shimonozo and White. More formally it proves the existence for any standard domino tableau $T$ of a class of signed permutations $C^B_{T}$ such that for any $\pi \in C^B_T$, $Des(\pi^{-1}) = Des(T)$ and there is a descent preserving bijection $C^B_T \rightarrow SDT(\la), \pi \mapsto U$ such that $tc(\pi) = sp(T) + sp (U).$ In a similar fashion as in Section \ref{sec : prodSchur}, we use Equation \eqref{eq : CauchyDesB} to show some properties for the {\it type B Littlewood-Richardson coefficients}.\\

First we focus on the case $q=1$. There is a well known (not descent preserving) bijection between semistandard domino tableaux of weight $\mu$ and semistandard bi-tableaux of respective weights $\mu^-$ and $\mu^+$ such that $\mu^+_i + \mu^-_i = \mu_i$ for all $i$.
The respective shapes of the two Young tableaux depend only on the shape of the initial semistandard domino tableau. 
Denote $(T^-,T^+)$ the bi-tableau associated to a semistandard domino tableau $T$ of shape $\la$ and $(\la^-,\la^+)$ their respective shapes. $(T^-,T^+)$ (resp. $(\la^-,\la^+)$) is the {\bf $2$-quotient} of $T$ (resp. $\la$). Denote $X^* = X\setminus{x_0}$. For $\la \in \mathcal{P}^0(n)$ one has $$\mathcal{G}_\la(X;1) = s_{\la^-}(X^*)s_{\la^+}(X)$$ and for $\la, \mu \in \mathcal{P}^0(n)\times\mathcal{P}^0(m)$, 
\begin{equation}\label{eq : GGG}
\mathcal{G}_\la(X;1)\mathcal{G}_\mu(X;1) = \sum_{\nu \in \mathcal{P}^0(n+m)}c^{\nu^-}_{\la^-\mu^-}c^{\nu^+}_{\la^+\mu^+}\mathcal{G}_\nu(X;1).
\end{equation}
The following result extends Proposition \ref{prop : typeA} to signed permutations.
\begin{thm}
\label{thm : thmChow}
Let $\la, \mu \in \mathcal{P}^0(n)\times\mathcal{P}^0(m)$. For $I \subset [0]\cup[n+m-1]$ denote $D^B_I$ the set of signed permutations $\pi$ such that $Des(\pi) =I$ and $\Delta^B_I$ the set of standard domino tableaux of descent set $I$. Using the notations above for integer partitions $\la, \mu$ and $T,U$ in $SDT(\la)\times SDT(\mu)$
\begin{equation}
\label{eq : thmChow}
\sum_{\nu \in \mathcal{P}^0(n+m)}c^{\nu^-}_{\la^-\mu^-}c^{\nu^+}_{\la^+\mu^+}|SDT(\nu) \cap \Delta^B_I| =  \left|\left (C^B_T \shuffle C^B_U \right) \cap D^B_I\right|.
\end{equation}
\end{thm}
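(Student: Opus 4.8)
The plan is to mimic the type A argument of Proposition~\ref{prop : typeA} step by step, replacing Gessel's quasisymmetric functions by Chow's, Young tableaux by domino tableaux, and the Cauchy identity by Equation~\eqref{eq : GGG} specialised at $q=1$. First I would start from the product $\mathcal{G}_\la(X;1)\mathcal{G}_\mu(X;1)$ and expand it in two different ways. On one side, apply Lemma~\ref{lem : GdF} at $q=1$ to write $\mathcal{G}_\la(X;1) = \sum_{T\in SDT(\la)}F^B_{Des(T)}(X)$ and likewise for $\mu$; then invoke the existence of the classes $C^B_T$ established from Proposition~\ref{prop : CauchyDesB}, which gives a descent-preserving bijection between $SDT(\la)$ and $C^B_T$, so that $\mathcal{G}_\la(X;1) = \sum_{\al \in C^B_T}F^B_{Des(\al)}(X)$. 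Using Chow's product rule $F^B_{Des(\al)}(X)F^B_{Des(\beta)}(X) = \sum_{\gamma \in \al\shuffle\beta}F^B_{Des(\gamma)}(X)$ and summing over $\al\in C^B_T$, $\beta\in C^B_U$ yields
\begin{equation}
\mathcal{G}_\la(X;1)\mathcal{G}_\mu(X;1) = \sum_{\gamma \in C^B_T \shuffle C^B_U}F^B_{Des(\gamma)}(X).
\end{equation}

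On the other side, expand the same product via Equation~\eqref{eq : GGG}, obtaining $\sum_{\nu \in \mathcal{P}^0(n+m)}c^{\nu^-}_{\la^-\mu^-}c^{\nu^+}_{\la^+\mu^+}\mathcal{G}_\nu(X;1)$, and then apply Lemma~\ref{lem : GdF} once more to each $\mathcal{G}_\nu(X;1) = \sum_{V\in SDT(\nu)}F^B_{Des(V)}(X)$. Equating the two expansions gives an identity of the form
\begin{equation}
\sum_{\substack{\nu \in \mathcal{P}^0(n+m)\\ V \in SDT(\nu)}}c^{\nu^-}_{\la^-\mu^-}c^{\nu^+}_{\la^+\mu^+}F^B_{Des(V)}(X) = \sum_{\gamma \in C^B_T \shuffle C^B_U}F^B_{Des(\gamma)}(X).
\end{equation}
Since the $F^B_I(X)$ for $I\subset\{0\}\cup[n+m-1]$ are linearly independent (for $M$, the number of variables, large enough — or formally in infinitely many variables), I can extract the coefficient of $F^B_I(X)$ on both sides. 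On the left this coefficient counts, with multiplicity $c^{\nu^-}_{\la^-\mu^-}c^{\nu^+}_{\la^+\mu^+}$, the standard domino tableaux $V\in SDT(\nu)$ with $Des(V)=I$, i.e. exactly $\sum_\nu c^{\nu^-}_{\la^-\mu^-}c^{\nu^+}_{\la^+\mu^+}|SDT(\nu)\cap\Delta^B_I|$; on the right it counts the signed permutations $\gamma \in C^B_T\shuffle C^B_U$ with $Des(\gamma)=I$, i.e. $|(C^B_T\shuffle C^B_U)\cap D^B_I|$. This is precisely Equation~\eqref{eq : thmChow}.

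The main obstacle I anticipate is the bookkeeping around Equation~\eqref{eq : GGG}: one must be careful that the $2$-quotient map $\nu \mapsto (\nu^-,\nu^+)$ is a bijection from $\mathcal{P}^0(n+m)$ onto pairs of partitions with $|\nu^-|+|\nu^+|=n+m$ whose part sizes are compatible with those of $\la,\mu$, and that the product rule for domino functions at $q=1$ genuinely factors as the stated product of two type A Littlewood–Richardson expansions (this is the content of $\mathcal{G}_\la(X;1)=s_{\la^-}(X^*)s_{\la^+}(X)$, but one should check the $x_0$ variable does not interfere when multiplying, since $X^* = X\setminus\{x_0\}$ appears only in the first factor). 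A secondary point needing care is the passage from ``existence of the classes $C^B_T$'' to the rewriting of $\mathcal{G}_\la(X;1)$ as a sum over $C^B_T$: this uses that the bijection $SDT(\la)\to C^B_T$ furnished by Proposition~\ref{prop : CauchyDesB} is descent-preserving, which is exactly what makes $\sum_{\al\in C^B_T}F^B_{Des(\al)} = \sum_{T'\in SDT(\la)}F^B_{Des(T')} = \mathcal{G}_\la(X;1)$ hold; I would state this explicitly before using it. Everything else is a routine transcription of the type A proof.
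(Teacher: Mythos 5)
Your proposal is correct and takes essentially the same route as the paper, whose proof is only a one-line sketch declaring the theorem a consequence of Equation~\eqref{eq : GGG} together with the product rule for the $F^B$ (the sketch's citation of Equation~\eqref{eq : FFF} appears to be a typo for Chow's shuffle formula). You have simply supplied in full the details the paper leaves implicit, mirroring the type~A argument of Proposition~\ref{prop : typeA} exactly as intended.
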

\begin{proof}(sketch)
Theorem \ref{thm : thmChow} is a consequence of Equations \eqref{eq : GGG} and \eqref{eq : FFF}.
\end{proof}
On the other hand, the general case for $q$ remains open. Indeed, using Proposition \ref{prop : CauchyDesB}, one can show for $\la \in \mathcal{P}^0(n)$ and $T \in SDT(\la)$ 
\begin{equation*}
\mathcal{G}_\la(X;q) = \sum_{\pi \in C^B_T}q^{tc(\pi) -sp(T)}F^B_{Des(\pi)}.
\end{equation*} 
Then, if $\mu \in \mathcal{P}^0(m)$ and $U \in SDT(\mu)$
\begin{equation*}
\mathcal{G}_\la(X;q)\mathcal{G}_\mu(X;q) = q^{-(sp(T)+sp(U))}\sum_{\gamma \in C^B_T \shuffle C^B_U}q^{tc(\gamma)}F^B_{Des(\gamma)}.
\end{equation*}
However, even if we showed that the set of signed permutations $C^B_T \shuffle C^B_U$ is in descent preserving bijection with a set of cardinality $c^{\nu^-}_{\la^-\mu^-}c^{\nu^+}_{\la^+\mu^+}|SDT(\nu)|$, we cannot say {\it a priori} how this bijection preserves or transforms the statistic $tc$. 
\printbibliography

\end{document}